\newtheorem{theorem}{Theorem}[section]
\newtheorem{corollary}[theorem]{Corollary}
\newtheorem{lemma}[theorem]{Lemma}
\theoremstyle{definition}
\numberwithin{equation}{section}
\numberwithin{figure}{section}
\newcommand{\C}{\mathbb C}
\newcommand{\R}{\mathbb R}
\newcommand{\edot}{\,\cdot\,}
\newcommand{\Ro}{\mathcal R}
\newcommand{\Fo}{\mathcal  F}
\newcommand{\Ho}{\mathcal H}
\newcommand{\Hi}{\mathrm  H}
\newcommand{\Io}{\mathcal  I}
\newcommand{\rmd}{\mathrm d}
\newcommand\abs[1]{\left\vert#1\right\vert}
\newcommand\set[1]{\left\{#1\right\}}
\newcommand{\win}{\mathbf n}
\newcommand{\ddim}{d}
\newcommand{\al}{\alpha}
\newcommand{\kk}{\mathbf k}
\newcommand{\ky}{\lambda}
\newcommand{\xx}{\mathbf x}
\newcommand{\yy}{y}
\newcommand{\uu}{\mathbf u}
\newcommand{\mm}{p}
\newcommand{\kl}[1]{\left(#1\right)}
\newcommand{\ekl}[1]{\left[#1\right]}
\newcommand{\mkl}[1]{\bigl(#1\bigr)}
\newcommand{\skl}[1]{(#1)}
\newcommand{\req}[1]{(\ref{eq:#1})}
\title{Exact reconstruction formulas for a Radon transform over cones}
\author{Markus Haltmeier}
\date{\today}
\begin{document}

\maketitle

\begin{abstract}
Inversion of  Radon transforms is the mathematical foundation of many modern tomographic imaging modalities. In this paper we study a conical Radon transform, which is important for computed tomography taking  Compton scattering into account. The conical Radon transform we study integrates a function in $\R^\ddim$ over all conical surfaces having vertices on a hyperplane and symmetry axis orthogonal to this plane.
As the  main result we
derive exact reconstruction formulas of the filtered back-projection type for inverting this transform.

\bigskip
\noindent
\textbf{Keywords.}
Radon transform, conical projections, computed tomography, inversion formula,
filtered back-projection.

\bigskip
\noindent
\textbf{AMS subject classifications.}
44A12, 45Q05, 92C55.
\end{abstract}

\section{Introduction}
\label{sec:intro}

Suppose that $f \colon \R^\ddim \to \R$,  with $\ddim \geq 2$,
is a smooth function supported in the half space
$\R^{\ddim-1} \times   \kl{0, \infty}$, and let
$\mm$ be some real number.
We  study the problem of reconstructing the function $f$ from the
integrals
\begin{equation} \label{eq:C-radon}
	\kl{\Ro^{(\mm)} f }\kl{\uu, \theta } \coloneqq
	\int_0^\infty \frac{1}{s^\mm} \int_{S^{\ddim-2}}
	f \kl{\uu + s \sin\kl{\theta} \win ,  s \cos\kl{\theta} }
	\kl{s \sin\kl{\theta}}^{\ddim-2} \rmd\win \rmd s
\end{equation}
for $\uu \in \R^{\ddim-1}$ and $ \theta \in \kl{0, \pi \slash 2}$. (Here $S^{\ddim-2}$ is the unit sphere in $\R^{d-1}$ and  $\rmd\win$ the surface measure on $S^{\ddim-2}$).
We call the function $\Ro^{(\mm)} f \colon \R^{\ddim-1}
\times (0,\pi\slash 2) \to \R$ the conical Radon transform of $f$. As illustrated in Figure~\ref{fig:cone}, $\mkl{\Ro^{(\mm)} f }\kl{\uu, \theta }$
is the integral of $f$ over the one sided conical surface $C\kl{\uu, \theta}$ having  vertex
$\kl{\uu, 0}$ on the plane $\R^{\ddim-1} \times\kl{0, \infty}$, symmetry axis
$\mathbf{e}_\ddim \coloneqq \kl{0, \dots, 0,1}$, and half opening angle $ \theta \in \kl{0,  \pi \slash 2}$.  The product $\kl{s \sin\kl{\theta}}^{\ddim-2} \rmd\win \rmd s$ is the standard surface measure on $C\kl{\uu, \theta}$, and $1\slash s^\mm$ is an additional  radial weight that can be adapted to a particular application at hand. For $\theta \in (0,\pi\slash 2)$, the function  $\mkl{\Ro^{(\mm)} f }\kl{\edot, \theta }$ may be considered  as a conical projection of $f$ onto $\R^{\ddim-1} \times \set{0}$.

\begin{psfrags}
\begin{figure}[tbh]
\centering
\psfrag{M}{$(\uu,0)$}
\psfrag{e}{vertex plane}
%\psfrag{C}{conical surface $C\kl{\uu, \theta}$}
\psfrag{C}{$C\kl{\uu, \theta}$}
\psfrag{t}{\scriptsize $\theta$}
\psfrag{r}{\scriptsize $s \sin \kl{\theta}$}
\psfrag{n}{$\mathbf{e}_n$}
%\psfrag{n}{symmetry axis}
\psfrag{P}{\scriptsize $\kl{\uu + s \sin\kl{\theta} \win ,  s \cos\kl{\theta} }$}
\includegraphics[width=0.8\textwidth]{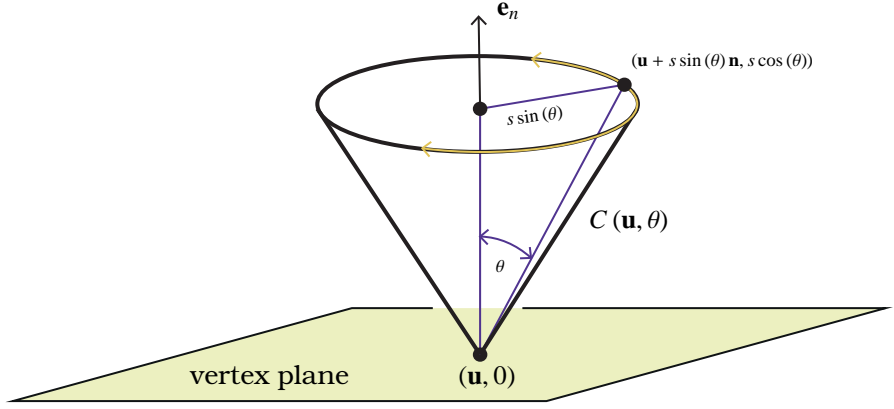}
\caption{The conical Radon transform\label{fig:cone} integrates a function with support in the upper half space
over one sided conical surfaces  $C\kl{\uu, \theta}$ centered  at $(\uu,0) \in \R^{\ddim-1} \times \kl{0, \infty}$  having  symmetry axis $\mathbf{e}_\ddim = \kl{0, \dots, 0,1}$
and half-angle $\theta \in \kl{0, \pi\slash 2}$.
Any point on $C\kl{\uu, \theta}$ can be written in the
form $\kl{\uu + s \sin\kl{\theta} \win ,  s \cos\kl{\theta}}$ with $\win \in S^{\ddim-2}$ and $s >0$. The $\ddim-1$ dimensional surface measure on $C\kl{\uu, \theta}$ is given by
$\kl{s \sin\kl{\theta}}^{\ddim-2} \rmd\win \rmd s$, with $\rmd\win$ denoting the
standard surface measure on $S^{d-2}$.}
\end{figure}
\end{psfrags}

Inversion of the conical Radon transform in three spatial dimension is important for computed tomography taking  Compton scattered photons into account \cite{BasZenGul98,CreBon94,Par00}.
In \cite{CreBon94,NguTruGra05} Fourier reconstruction formulas have been derived for the
cases  $\mm  \in \set{0,2}$.
For two spatial dimensions, $\Ro^{(\mm)}$ has been studied  with $\mm  \in \set{0,2}$ in \cite{BasZenGul97,TruNgu11},
where reconstruction formulas of the back-projection type have  been derived.  In dimensions $\ddim >3$, the conical Radon transform has, to the best of our knowledge,  not been studied so far. In this paper we study $\Ro^{(\mm)}$  for any $\ddim \geq 2$ and any $\mm \in \R$. We derive  explicit reconstruction formulas of the back-projection
type (see Theorem \ref{thm:fbp}) as well as a Fourier slice identity (see Theorem \ref{thm:fourierslice}) similar to the one of the classical Radon transform.

\subsection{Statement of  the main results}

Before we present our main results we introduce  some notation.
By $C^\infty_c \skl{\R^{\ddim-1} \times \kl{0, \infty}}$
we denote  the space of all functions defined on $\R^\ddim$, that are $C^\infty$ and have compact support in
$ \R^{\ddim-1} \times \kl{0, \infty}$.
Likewise $C^\infty\skl{\R^{\ddim-1} \times \kl{0, \pi\slash 2}}$ denotes the space of all infinitely smooth functions defined on
$\R^{\ddim-1} \times \kl{0, \pi\slash 2}$.
As can easily be seen, the conical Radon transform  defined by   \req{C-radon}
is well defined as an operator
$\Ro^{(\mm)} \colon C_c^\infty\mkl{\R^{\ddim-1} \times \skl{0, \infty}} \to C^\infty\mkl{\R^{\ddim-1} \times \skl{0, \pi\slash 2}}$.

Points in  $\R^\ddim$ will be written in the form
$\kl{\xx, \yy}$ with $\xx \in \R^{\ddim-1}$ and
$\yy \in \R$.
The Fourier transform of a function
$ f \colon \R^{\ddim-1} \times \R \to \C$ with respect to the first component is denoted by
$\kl{\Fo f}\kl{\kk, \yy} \coloneqq
	\int_{\R^{\ddim-1}} e^{-i\kk\xx}
	f \kl{\xx, \yy} \rmd \xx
	$
	for $\kl{\kk, \yy} \in \R^{\ddim-1} \times \R$.
The Hankel transform  of order $\kl{\ddim-3} \slash 2$ in the  second  argument is denoted by
$	\kl{\Ho_{(\ddim-3)/2} f } \kl{\xx, \ky}
	\coloneqq
	\int_0^\infty
	J_{\skl{\ddim-3}\slash 2} \kl{\yy \ky}
	f \kl{\xx, \yy}
    \yy \rmd \yy
    $ for
    $\kl{\xx, \ky} \in \R^{\ddim-1} \times \kl{0, \infty}$, where $J_{(\ddim-3)/2}$ is  the Bessel function of the first kind of order $\kl{\ddim-3} \slash 2$. Note that for $\ddim=2$, we have
$J_{-1/2}\kl{\yy}  =  \sqrt{2 \slash \pi \yy} \cos \kl{\yy}$ and hence $\Ho_{-1/2}$ is closely related  to the  cosine transform.

Similarly, we  denote by $ \Fo g$
the Fourier transform of a function $g \colon \R^{\ddim-1} \times \kl{0, \pi\slash 2} \to \C$ with respect to the first argument. Finally,
we denote by  $\Io^{(1-\ddim)}  g \colon \R^{\ddim-1} \times
\kl{0, \pi\slash 2} \to \C$ the  Riesz potential of $g$, defined by
\begin{equation} \label{eq:riesz}
\kl{\Fo \Io^{(1-\ddim)} g} \kl{\kk, \theta} \coloneqq \abs{\kk}^{\ddim-1}
\kl{\Fo g} \kl{\kk, \theta}
\quad \text{ for } \kl{\kk, \theta} \in \R^{\ddim-1} \times \kl{0, \frac{\pi}{2}}
\,.
\end{equation}
The Riesz potential  is  well defined if
$\kl{\Fo g} \kl{\edot, \theta} \in L^1\mkl{\R^{\ddim-1}}$ for every $\theta  \in \kl{0, \pi\slash 2}$,
which will always be the case in our considerations.

%Finally, for some exponent $\al \in  \R$
%we use $\yy^{\al}$ to denote the operator that
%multiplies a  function
%$f\kl{\xx, \yy}$ by $\yy^{\al}$ and likewise
%$\cos\kl{\theta}^{\al}$ for
%the operator that multiplies $g \kl{\uu, \theta}$ with
%$\cos\kl{\theta}^{\al} $.

\subsubsection*{Explicit reconstruction formulas}

The central results of this paper are the  following  explicit reconstruction formulas  for inverting the conical Radon transform.

\begin{theorem}[Reconstruction formulas for the conical Radon transform]
For\label{thm:fbp} every $\mm \in \R$, every $f  \in C^\infty_c \mkl{\R^{\ddim-1} \times \kl{0, \infty}}$ and
every $\kl{\xx, \yy} \in \R^{\ddim-1} \times \kl{0, \infty}$, we have
\begin{align}\label{eq:fbp}
f\kl{\xx, \yy}
&=
\frac{\yy^\mm}{\kl{2\pi}^{\ddim-1}}
	\int_{0}^{\pi/2}
	\frac{1}{\cos\kl{\theta}^{1+\mm}}
	\int_{S^{\ddim-2}}
	\kl{ \Io^{(1-\ddim)} \Ro^{(\mm)} f} \kl{\xx  + \yy \tan\kl{\theta} \win, \theta}
 	\rmd\win
 	\rmd\theta \,,
\\
\label{eq:fbp2}
f\kl{\xx, \yy}
&=
\frac{1}{\kl{2\pi}^{\ddim-1}}
	\int_{\R^{\ddim-1}}
	\frac{\kl{ \abs{\uu-\xx}^2 + \yy^2}^{\frac{\mm-1}{2}}}
	{
	\abs{\uu-\xx}^{\ddim-2}}
	\kl{ \Io^{(1-\ddim)} \Ro^{(\mm)} f}
	\kl{\uu, \arctan\kl{\frac{\abs{\uu-\xx}}{\yy}}}
	\,
	\rmd \uu
	\,.
\end{align}
Here $\Io^{(1-\ddim)}$ is the Riesz potential
 defined by \req{riesz}.
\end{theorem}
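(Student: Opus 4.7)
The natural strategy is to verify \eqref{eq:fbp} after Fourier transforming in the $\xx$-variable, then deduce \eqref{eq:fbp2} by polar coordinates. Applying $\Fo$ (in the first slot) to both sides of \eqref{eq:fbp}, using the shift property $\Fo\ekl{g(\edot + a\win)}(\kk) = e^{ia\kk\cdot\win}(\Fo g)(\kk)$, and invoking \eqref{eq:riesz}, the task reduces to proving
\[
\kl{\Fo f}\kl{\kk, \yy} = \frac{\yy^\mm \abs{\kk}^{\ddim-1}}{(2\pi)^{\ddim-1}} \int_0^{\pi/2} \frac{\kl{\Fo\Ro^{(\mm)}f}\kl{\kk, \theta}}{\cos(\theta)^{1+\mm}} \int_{S^{\ddim-2}} e^{i\yy\tan(\theta)\kk\cdot\win} \rmd\win \rmd\theta.
\]

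Two ingredients then drive the computation. First, the classical identity
\[
\int_{S^{\ddim-2}} e^{ir\kk\cdot\win}\rmd\win = (2\pi)^{(\ddim-1)/2} (r\abs{\kk})^{(3-\ddim)/2} J_{(\ddim-3)/2}(r\abs{\kk})
\]
rewrites every spherical plane-wave integral as a Bessel kernel. Second, applying $\Fo$ directly to \eqref{eq:C-radon}---the interchange of integrations being justified by the compact support and smoothness of $f$---then using this same identity and substituting $\yy' = s\cos\theta$ yields the Fourier slice formula
\[
\kl{\Fo\Ro^{(\mm)} f}\kl{\kk, \theta} = c_\theta\, \abs{\kk}^{(3-\ddim)/2} \kl{\Ho_{(\ddim-3)/2}\ekl{\yy'^{(\ddim-3)/2-\mm}\kl{\Fo f}(\kk, \edot)}}(\abs{\kk}\tan\theta),
\]
with $c_\theta = (2\pi)^{(\ddim-1)/2}(\sin\theta)^{(\ddim-1)/2}\cos(\theta)^{\mm-(\ddim+1)/2}$.

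Substituting this formula back produces a double integral in $(\theta, \yy')$ containing the product $J_{(\ddim-3)/2}(\yy\abs{\kk}\tan\theta) J_{(\ddim-3)/2}(\yy'\abs{\kk}\tan\theta)$ weighted by powers of $\sin\theta$, $\cos\theta$, $\abs{\kk}$, $\yy$ and $\yy'$. The substitution $\ky = \abs{\kk}\tan\theta$, under which $\sin(\theta)/\cos(\theta)^3\,\rmd\theta = \ky/\abs{\kk}^2\,\dS$, absorbs precisely the factor $\abs{\kk}^{\ddim-1}$ together with the mismatched powers of $\cos\theta$, collapsing the $\theta$-integral into the Hankel orthogonality relation
\[
\int_0^\infty J_{(\ddim-3)/2}(\yy\ky) J_{(\ddim-3)/2}(\yy'\ky) \ky\, \dS = \frac{\delta(\yy-\yy')}{\yy},
\]
valid distributionally since $(\ddim-3)/2 \geq -1/2$. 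Sifting this against the $\yy'$-integrand returns $\kl{\Fo f}(\kk, \yy)$ and establishes \eqref{eq:fbp}. The principal obstacle is the bookkeeping needed to check that the many exponents of $\sin\theta$, $\cos\theta$, $\yy$, $\yy'$ and $\abs{\kk}$ cancel exactly.

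Finally, for \eqref{eq:fbp2}, I would apply to \eqref{eq:fbp} the change of variables $\uu = \xx + \yy\tan(\theta)\win$, viewing $(\theta, \win) \in (0,\pi/2)\times S^{\ddim-2}$ as angular coordinates in $\R^{\ddim-1}$ centered at $\xx$ with radius $r = \yy\tan\theta = \abs{\uu - \xx}$. The Jacobian $r^{\ddim-2}\rmd r = (\yy\tan\theta)^{\ddim-2}\yy\sec^2(\theta)\rmd\theta$, combined with $1/\cos(\theta)^{1+\mm}$ and the identity $\cos\theta = \yy/\sqrt{\abs{\uu-\xx}^2 + \yy^2}$, assembles the weight $(\abs{\uu-\xx}^2 + \yy^2)^{(\mm-1)/2}/\abs{\uu-\xx}^{\ddim-2}$ appearing in \eqref{eq:fbp2}, while the angle argument becomes $\arctan(\abs{\uu-\xx}/\yy)$; this is a routine but careful computation.
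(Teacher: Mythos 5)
Your proposal is correct and follows essentially the same route as the paper: both arguments rest on the Fourier slice identity (which you derive exactly as the paper does, from the plane-wave expansion over $S^{\ddim-2}$) combined with Hankel inversion, and your passage from \req{fbp} to \req{fbp2} via polar coordinates around $\xx$ is identical to the paper's; your exponent bookkeeping does check out. The only differences are organizational and immaterial: you \emph{verify} \req{fbp} by Fourier transforming it, inserting the slice identity, and collapsing the $\theta$-integral with the distributional Bessel closure relation $\int_0^\infty J_{(\ddim-3)/2}(\yy\ky)J_{(\ddim-3)/2}(\yy'\ky)\,\ky\,\dS = \delta(\yy-\yy')/\yy$, whereas the paper runs the argument forward from the Fourier--Hankel inversion of $f$ (which is precisely the rigorous content of your $\delta$-function step, and is unproblematic here since $f\in C_c^\infty$), and the paper first reduces to $\mm=0$ via Lemma~\ref{lem:munu} while you carry general $\mm$ throughout.
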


\begin{proof}
See Sections~\ref{sec:fbp} and~\ref{sec:fbp2}.
\end{proof}

The reconstruction formulas \req{fbp}, \req{fbp2}
are of the filtered back-projection type:
The Riesz potential can be interpreted as a filtration step
in the first argument and the integrations actually sum over all conical surfaces that pass through the
reconstruction point $\kl{\xx, \yy} \in \R^{\ddim-1} \times \kl{0, \infty}$. In analogy to the classical Radon transform the integration process  may therefore be called \emph{conical back-projection}.   Note that \req{fbp}, \req{fbp2} only differ up to a different parametrization of the set of all conical surfaces  passing through the reconstruction point.

For practical applications, the two and three dimensional situations are the most relevant ones. In these cases the
formulas of Theorem \ref{thm:fbp} read as follows.

\begin{corollary}[Reconstruction formulas\label{thm:fbp-23d} for  $\ddim = 2, 3$]   \mbox{}
\begin{enumerate}[topsep=0em,label=(\alph*)]
\item\label{it:fbp-2d}
Suppose $\ddim=2$. Then, for every
$f  \in C^\infty_c \skl{\R \times \kl{0, \infty}}$ and every
$ \kl{x, \yy} \in   \R  \times \kl{0, \infty}$,
\begin{align*}%\label{eq:fbp}
f\kl{x, \yy}
&=
\frac{\yy^\mm}{2\pi}
	\int_{0}^{\pi/2}
	\frac{\mkl{\partial_u \Hi_u  \Ro^{(\mm)} f}
	\kl{x  +  \win \yy \tan\kl{\theta} , \theta}
	+
	\mkl{\partial_u \Hi_u  \Ro^{(\mm)} f}
	\kl{x  -  \win \yy \tan\kl{\theta} , \theta}}
	{\cos\kl{\theta}^{1+\mm}}
	\, \rmd\theta \,,
	\\
	f\kl{x, \yy}
		&=
		\frac{1}{2\pi}
	\int_{\R}
	\kl{\abs{u-x}^2+\yy^2}^{\frac{\mm-1}{2}}
	\kl{\partial_u \Hi_u  \Ro^{(\mm)} f}
	\kl{x , \arctan \kl{\frac{\abs{u-x}}{\yy}}}
 	\, \rmd u  \,.
\end{align*}
Here $\partial_u$ and  $\Hi_u$ denote the derivative and the Hilbert transform in first argument.

\item\label{it:fbp-3d}
Suppose  $\ddim=3$.
Then, for every
$f  \in C^\infty_c \mkl{\R^2 \times \kl{0, \infty}}$ and
every $ \kl{\xx, \yy} \in   \R^{2} \times \kl{0, \infty}$,
\begin{align*}%\label{eq:fbp}
f\kl{\xx, \yy}
&=
- \frac{\yy^\mm}{ 4\pi ^2}
	\int_{0}^{\pi/2}
	\frac{1}{\cos\kl{\theta}^{1+\mm}}
	\int_{S^{1}}
	\kl{ \Delta_{\uu} \Ro^{(\mm)} f} \kl{\xx  + \yy \tan\kl{\theta} \win, \theta}
 	\rmd\win
 	\rmd\theta
\\
f\kl{\xx, \yy}
&=
-
\frac{1}{4\pi^2}
	\int_{\R^{2}}
	\frac{\kl{\abs{\uu-\xx}^2+\yy^2}^{\frac{\mm-1}{2}}}
	{\abs{\uu-\xx}}
	\kl{\Delta_{\uu} \Ro^{(\mm)} f}
	\kl{\uu, \arctan\kl{\frac{\abs{\uu-\xx}}{\yy}}}
	\,
	\rmd \uu
	 \,.
\end{align*}
Here $\Delta_{\uu}$ denotes the Laplacian in the first component.
\end{enumerate}
\end{corollary}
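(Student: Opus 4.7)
The plan is to show that Corollary~\ref{thm:fbp-23d} follows by direct specialization of Theorem~\ref{thm:fbp} to $\ddim \in \set{2,3}$, so the main task reduces to identifying the Riesz potential $\Io^{(1-\ddim)}$ with familiar operators in these low dimensions and simplifying the geometric factors in~\req{fbp} and~\req{fbp2}.

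For $\ddim=2$ the Riesz potential $\Io^{(-1)}$ is the operator that multiplies the Fourier transform in the first argument by $\abs{k}$. Writing $\abs{k}=(-i\operatorname{sign}(k))(ik)$ I recognize the two Fourier multipliers as those of the Hilbert transform $\Hi_u$ and the derivative $\partial_u$, respectively, which gives the identity $\Io^{(-1)}=\partial_u \Hi_u$. I would record this as a short lemma (or inline remark). For $\ddim=3$ the multiplier is $\abs{\kk}^2$, and since the Laplacian $\Delta_\uu$ acts as multiplication by $-\abs{\kk}^2$ on the Fourier side, one has $\Io^{(-2)}=-\Delta_\uu$; the minus sign accounts for the minus signs appearing in the stated 3D formulas.

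With these identifications, item~\ref{it:fbp-2d} for $\ddim=2$ follows by substituting $\Io^{(-1)}=\partial_u\Hi_u$ into \req{fbp} and \req{fbp2}, noting that $S^{\ddim-2}=S^0=\set{-1,+1}$ so the integral $\int_{S^0}\rmd\win$ becomes the sum of the values at $\win=+1$ and $\win=-1$ (producing the two summands in the first displayed equation), and using $\abs{u-x}^{\ddim-2}=1$ in the second. Similarly, item~\ref{it:fbp-3d} follows by substituting $\Io^{(-2)}=-\Delta_\uu$ into the two formulas of Theorem~\ref{thm:fbp}, using $(2\pi)^{\ddim-1}=4\pi^2$ and $\abs{\uu-\xx}^{\ddim-2}=\abs{\uu-\xx}$, and reading off the signs.

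I do not anticipate a genuine obstacle: the whole argument is bookkeeping, and the only substantive observation is the Fourier-multiplier calculation that yields $\Io^{(-1)}=\partial_u\Hi_u$ and $\Io^{(-2)}=-\Delta_\uu$. The mild subtlety is to make sure the conventions (sign of the Hilbert transform, normalization of the Fourier transform already fixed in the paper, and the fact that $\kl{\Fo g}(\edot,\theta)\in L^1$ so that the Riesz potential is pointwise defined) are consistent with those used in Theorem~\ref{thm:fbp}; these have been fixed earlier in the excerpt, so no additional work is required.
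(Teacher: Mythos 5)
Your proposal is correct and follows essentially the same route as the paper: both identify $\Io^{(-1)}=\partial_u\Hi_u$ and $\Io^{(-2)}=-\Delta_\uu$ via their Fourier multipliers and then specialize the two formulas of Theorem~\ref{thm:fbp}, with the remaining steps being bookkeeping of constants and the interpretation of $S^0=\set{-1,+1}$.
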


\begin{proof} \mbox{}
For  $\ddim=2$ we have  the Fourier representation
$\mkl{\Fo  \Hi_u f}  \kl{k} = - i \operatorname{sign} \kl{k} \mkl{\Fo f } \kl{k}$
and $\mkl{\Fo \partial_u f}  \kl{k} = i k \mkl{\Fo f } \kl{k}$ of the Hilbert transform and the one dimensional derivative, respectively. This  shows
$\Io^{-1}  =  \partial_u \Hi_u $. Hence Item \ref{it:fbp-2d}
follows from Theorem \ref{thm:fbp}.
Similarly,  for $\ddim=3$, we have  $\Io^{-2}  =  - \Delta_{\uu}$ and hence Item \ref{it:fbp-3d} again follows from Theorem \ref{thm:fbp}.
\end{proof}

For $\ddim=2$, formulas equivalent to the ones of Theorem~\ref{thm:fbp-23d}~\ref{it:fbp-2d} have been first derived
in \cite{BasZenGul97,TruNgu11}.
The three dimensional reconstruction formulas of Theorem~\ref{thm:fbp-23d}~\ref{it:fbp-3d}  (as well as the  higher dimensional generalizations of Theorem \ref{thm:fbp}) are  new.
One notes, that in three spatial dimensions the reconstruction formulas are  particularly
simple and further local: The reconstruction of  $f$ at  some reconstruction point $\kl{\xx, \yy}$  only requires the integrals over cones passing through an arbitrarily small neighbourhood  of  $\kl{\xx, \yy}$. Since for any odd $\ddim$, the Riesz potential satisfies $\Io^{(1-\ddim)}  =
\kl{-1}^{(\ddim-1)/2}  \Delta_{\uu}^{(\ddim-1)/2}$, the reconstruction formulas \req{fbp}, \req{fbp2}
are in fact local for every odd space dimensions. Contrary, in even space dimension \req{fbp}, \req{fbp2} are
non-local: Recovering a function at a single point requires knowledge of the integrals over all conical surfaces.
This  behaviour is similar to the one of the
classical Radon transform, where also the inversion is local in odd and non-local in even dimensions
(see, for example, \cite[p. 20]{Nat01}).

\subsubsection*{A Fourier slice identity}

Theorem~\ref{thm:fbp}  will be established using the
following Theorem \ref{thm:fourierslice}, which an analogon of the well known Fourier slice identity \cite[Chapter 1, Theorem 1.1]{Nat01} satisfied by the classical Radon transform.

 \begin{theorem}[Fourier\label{thm:fourierslice} slice identity for the conical Radon transform]
For every $\mm\in \R$, every $f \in C_c^\infty \mkl{\R^{\ddim-1} \times (0, \infty)}$
and every $ \kl{\kk,\theta } \in \R \times (0, \pi/2)$, we have
\begin{equation} \label{eq:fourierslice}
	\kl{ \Ho_{\frac{\ddim-3}{2}} \Fo
	\yy^{\frac{\ddim-3}{2} - \mm}f}
	\kl{\kk, \abs{\kk} \tan\kl{\theta}}
	=
	\kl{2\pi}^{\frac{1-\ddim}{2}}
	\frac{  \cos\kl{\theta}^{1-\mm}}{\tan\kl{\theta}^{\frac{\ddim-1}{2}}}
	\abs{\kk}^{\frac{\ddim-3}{2}}
	\kl{\Fo \Ro^{(\mm)} f}\kl{\kk,\theta}
	\,.
\end{equation}
Here  $\yy^{\skl{\ddim-3}\slash 2-\mm}f$ is the function
$\kl{\xx, \yy} \mapsto  \yy^{\skl{\ddim-3}\slash 2 -\mm}f\kl{\xx, \yy}$, $\Fo$ the Fourier transform
in the first argument, and
$\Ho_{\skl{\ddim-3}\slash 2}$
 the  Hankel transform of order $\skl{\ddim-3}\slash 2$
 in the second argument.
\end{theorem}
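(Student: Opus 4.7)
The plan is to start from the definition \req{C-radon} and perform the Fourier transform in $\uu$ under the integral sign, which is legitimate by Fubini thanks to the compact support of $f$. After the change of variables $\xx = \uu + s\sin(\theta)\win$ in the inner integral, the factor $e^{-i\kk\uu}$ becomes $e^{-i\kk\xx}\,e^{is\sin(\theta)\kk\cdot\win}$; the $\xx$-integral yields $(\Fo f)(\kk, s\cos(\theta))$, leaving only an integral in $s$ and in $\win \in S^{\ddim-2}$.

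Next I would evaluate the remaining spherical integral using the classical Bessel identity
\[
\int_{S^{n-1}} e^{i x \cdot \win}\,\rmd\win
=
(2\pi)^{n/2}\,\abs{x}^{1-n/2}\,J_{n/2-1}(\abs{x}),
\]
applied with $n = \ddim - 1$ and $x = s\sin(\theta)\kk$. This turns $\int_{S^{\ddim-2}} e^{is\sin(\theta)\kk\cdot\win}\rmd\win$ into an explicit Bessel-type factor depending only on $s\sin(\theta)\abs{\kk}$, so that
\[
(\Fo \Ro^{(\mm)} f)(\kk,\theta)
=
(2\pi)^{\frac{\ddim-1}{2}} \frac{(\sin\theta)^{\frac{\ddim-1}{2}}}{\abs{\kk}^{\frac{\ddim-3}{2}}}
\int_0^\infty s^{\frac{\ddim-1}{2}-\mm}\, J_{\frac{\ddim-3}{2}}(s\sin(\theta)\abs{\kk})\,(\Fo f)(\kk, s\cos\theta)\,\rmd s.
\]

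Then I would perform the substitution $\yy = s\cos(\theta)$, which pulls out a factor $\cos(\theta)^{\mm-1-\frac{\ddim-1}{2}}$ and turns the argument of the Bessel function into $\yy\,\abs{\kk}\tan(\theta)$. Writing $s^{\frac{\ddim-1}{2}-\mm}\,\rmd s = \yy^{\frac{\ddim-3}{2}-\mm}\cdot\yy\,\rmd\yy$ lets me recognize the integral as precisely the Hankel transform of order $(\ddim-3)/2$ of $\yy \mapsto \yy^{\frac{\ddim-3}{2}-\mm}(\Fo f)(\kk,\yy)$, evaluated at $\abs{\kk}\tan\theta$. Solving for this Hankel transform and combining the trigonometric prefactors via $(\sin\theta)^{-(\ddim-1)/2}\cos(\theta)^{(\ddim+1)/2-\mm} = \cos(\theta)^{1-\mm}/\tan(\theta)^{(\ddim-1)/2}$ gives exactly \req{fourierslice}.

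The main obstacle is bookkeeping: I must track the powers of $s$, $\sin\theta$, and $\cos\theta$ throughout, and correctly identify the resulting one-dimensional integral in $\yy$ as a Hankel transform evaluated at the specific point $\abs{\kk}\tan\theta$. Once the Bessel identity for the spherical average is invoked, everything else is a straightforward change of variables and algebraic simplification, with Fubini justified by the smoothness and compact support of $f$ and the rapid decay of $(\Fo f)(\kk,\yy)$ in $\kk$.
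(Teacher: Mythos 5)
Your proposal is correct and follows essentially the same route as the paper: Fourier transform under the integral, the classical identity for $\int_{S^{\ddim-2}} e^{i x \cdot \win}\,\rmd\win$ in terms of $J_{(\ddim-3)/2}$, the substitution $\yy = s\cos(\theta)$, and recognition of the resulting $\yy$-integral as a Hankel transform; your power bookkeeping checks out. The only (immaterial) difference is organizational: the paper first proves the case $\mm=0$ and then extends to general $\mm$ via the relation $\Ro^{(\mm)} f = \cos(\theta)^{\mm}\,\Ro^{(0)}\yy^{-\mm} f$ of Lemma~\ref{lem:munu}, whereas you carry the weight $s^{-\mm}$ through the computation directly.
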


 \begin{proof}
See Section~\ref{sec:fs}.
\end{proof}

The Fourier slice identity is of course of interest on its own.
The argument $(\kk, \abs{\kk} \tan\theta)$, for $\kk \in \R^{\ddim-1}$
and  $\al \in (0, \pi/2)$, appearing on the left hand side of  \req{fourierslice},
fills in the whole upper half-space, which is required to invert the Fourier-Hankel transform using well known explicit and stable inversion formulas. Hence
the function $f$ can be reconstructed based on
\req{fourierslice} by means of a $\ddim-1$-dimensional Fourier
transform, followed by an interpolation, and finally performing an inverse
$\ddim$-dimensional Fourier-Hankel transform.

\subsection{Outline}

The remainder of the paper is mainly devoted to the
proofs of  Theorems~\ref{thm:fbp} and \ref{thm:fourierslice}
that we will establish in the following Section \ref{sec:proofs}. We will first derive the Fourier slice identity of Theorem \ref{thm:fourierslice}, which will then be used to proof
the reconstruction formulas of Theorem~\ref{thm:fbp}.
The paper ends with a discussion in Section \ref{sec:discussion}.

\section{Proofs of the main results}
\label{sec:proofs}

In this section  we derive  Theorems~\ref{thm:fbp} and \ref{thm:fourierslice}.
The following elementary Lemma shows that
it suffices to derive these results for the special
case $\mm=0$.

\begin{lemma}[Relation between $\Ro^{(\mm)}$ and $\Ro^{(0)}$]
For\label{lem:munu} every  $\mm \in \R$,  every
$f \in C_c^\infty \mkl{\R^{\ddim-1} \times (0, \infty)}$
and every $\kl{\kk,\theta } \in \R^{\ddim-1} \times (0, \pi/2)$,
we have
\begin{equation}\label{eq:munu}
\kl{\Ro^{(\mm)} f} \kl{\uu, \theta} =
\kl{ \cos\kl{\theta}^{\mm}
\Ro^{(0)}   \yy^{-\mm} f} \kl{\uu, \theta} \,.
\end{equation}
Here $\yy^{-\mm}$ stands for the operator that multiplies a  function
$f\kl{\xx, \yy}$ by $\yy^{-\mm}$ and likewise $\cos\kl{\theta}^{\mm}$ stands for
the operator that multiplies $g \kl{\uu, \theta}$ by $\cos\kl{\theta}^{\mm} $.
\end{lemma}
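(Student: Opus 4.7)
The identity is purely algebraic and will follow by writing out both sides from their definitions and matching the integrands. The plan is to start from \req{C-radon} and exploit the fact that on the conical surface $C(\uu,\theta)$ the vertical coordinate equals $y = s\cos(\theta)$, so any $y$-dependent factor of the integrand can be pulled into the radial weight.

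First I would substitute the definition of $\Ro^{(0)}$ applied to the function $(\xx, \yy) \mapsto \yy^{-\mm} f(\xx, \yy)$, giving
\[
\kl{\Ro^{(0)}(\yy^{-\mm} f)}(\uu,\theta)
= \int_0^\infty \int_{S^{\ddim-2}} \kl{s\cos(\theta)}^{-\mm}
 f\kl{\uu + s\sin(\theta)\win,\, s\cos(\theta)}
 \kl{s\sin(\theta)}^{\ddim-2}\, \rmd\win\, \rmd s,
\]
since $y^{-\mm}$ is evaluated at $y = s\cos(\theta)$. Multiplying by $\cos(\theta)^{\mm}$ then cancels exactly the $\cos(\theta)^{-\mm}$ factor, leaving a factor $s^{-\mm}$ in front of $f$. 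Comparing with the definition \req{C-radon} of $\Ro^{(\mm)}f$, we see the two integrands coincide, which gives \req{munu}.

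There is no real obstacle here: the step is a one-line substitution relying on the geometric identity $y = s\cos(\theta)$ along the cone. The only thing worth noting is that since $\theta \in (0,\pi/2)$ we have $\cos(\theta) \neq 0$, so the multiplications and divisions by $\cos(\theta)^{\mm}$ are legitimate, and since $f$ is compactly supported in the open upper half space the integrand vanishes in a neighbourhood of $y=0$, so the factor $y^{-\mm}$ causes no integrability issues for any real $\mm$.
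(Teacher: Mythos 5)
Your proof is correct and rests on the same key observation as the paper's, namely that the second argument of $f$ on the cone equals $s\cos(\theta)$, so that $\yy^{-\mm}$ evaluates to $(s\cos(\theta))^{-\mm}$. The only cosmetic difference is that the paper reaches the same conclusion via the substitution $s=\yy/\cos(\theta)$ in the definition of $\Ro^{(\mm)}f$, whereas you compare the integrands directly, which is if anything slightly more economical.
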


\begin{proof}
The definition of $\Ro^{(\mm)}$ and the substitution
$s = \yy \slash \cos \kl{\theta}$ yield
\begin{multline*}
\kl{\Ro^{(\mm)} f} \kl{\uu, \theta}
=
\int_0^\infty \frac{1}{s^\mm} \int_{S^{\ddim-2}}
	f \kl{\uu + s \sin\kl{\theta} \win ,  s \cos\kl{\theta} }
	\kl{s \sin\kl{\theta}}^{\ddim-2} \rmd\win \rmd s
\\
=
\cos\kl{\theta}^{\mm-1}
\int_0^\infty \int_{S^{\ddim-2}}
	\yy^{-\mm}  f \kl{\uu + \yy \tan\kl{\theta} \win , \yy }
	\kl{\yy \tan\kl{\theta}}^{\ddim-2}
\rmd\win\rmd \yy
 \,.
\end{multline*}
Comparing the last expression for $\Ro^{(\mm)} f $  with the corresponding expression for
$\Ro^{(0)} f$ obviously shows \req{munu}.
\end{proof}

\subsection{Proof of  Theorem ~\ref{thm:fourierslice}
(the Fourier slice identity)}
\label{sec:fs}

We start by showing \req{fourierslice} for the special
case $\mm =0$. The general case will then be a
consequence of Lemma~\ref{lem:munu}.

The definition of the conical Radon transform, the definition of the Fourier
transform and some basic manipulations yield
\begin{align*}
	\kl{\Fo \Ro^{(0)} f}\kl{\kk,\theta}
    & =
	\int_{\R^{\ddim-1}} e^{-i \kk\uu } \kl{\Ro^{(0)} f}\kl{\uu, \theta}
    	\rmd \uu
	\\
	& =
	\int_{0}^\infty
	\int_{S^{\ddim-2} }
	 \int_{\R^{\ddim-1}} e^{-i \kk \uu}
	 f \kl{ \uu + s \sin \kl{\theta} \win ,  s \cos\kl{\theta}}
	\kl{s \sin\kl{\theta}}^{\ddim-2}
	\,
    \rmd\uu
    \rmd\win
    \rmd s
	\\
	& =
	\int_{0}^\infty
		\kl{s \sin\kl{\theta}}^{\ddim-2}
	\int_{S^{\ddim-2} }
	 e^{ i \kk s \sin \kl{\theta} \win}
	 \int_{\R^{\ddim-1}} e^{-i \kk \uu}
	 f \kl{ \xx ,  s \cos\kl{\theta}}
	\,
    \rmd\uu
    \rmd\win
    \rmd s
		\\
	& =
	\int_{0}^\infty
	\kl{s \sin\kl{\theta}}^{\ddim-2}
	\kl{\Fo f}  \kl{\kk, s \cos\kl{\theta}  }
	\ekl{\int_{S^{\ddim-2} }
	 e^{ i \kk s \sin \kl{\theta} \win}
	\rmd\win }
	\rmd s\,.
\end{align*}
Now we use the identity (see, for example, \cite[page 198]{Nat01}),
\begin{equation} \label{eq:ps}
\int_{S^{\ddim-2}}
e^{- i \kk r  \win } \rmd\win
=
\kl{2\pi}^{\frac{\ddim-1}{2}} \abs{\kk}^{\frac{3-\ddim}{2}} r^{\frac{3-\ddim}{2}}
J_{\frac{\ddim-3}{2}} \kl{ \abs{\kk}r  }
\quad \text{ for all }
\kl{\kk, r}  \in \R^{\ddim-1} \times \kl{0, \infty}
\,.
\end{equation}
Application of \req{ps} with $r = s \sin\kl{\theta}$ followed by the substitution
$ s= \yy \slash \cos \kl{\theta}$  yields
\begin{align*}
	\kl{\Fo \Ro^{(0)} f}\kl{\kk,\theta}
	&=
	\kl{2\pi}^{\frac{\ddim-1}{2}}
	\int_{0}^\infty
	\kl{s \sin\kl{\theta}}^{\ddim-2}
	\kl{\Fo f}  \kl{\kk, s \cos\kl{\theta}  }
	\abs{\kk}^{\frac{3-\ddim}{2}} \kl{s \sin\kl{\theta}}^{\frac{3-\ddim}{2}}
J_{\frac{\ddim-3}{2}} \kl{ \abs{\kk}s \sin\kl{\theta}  }
	\rmd s
	\\
	&=
	\kl{2\pi}^{\frac{\ddim-1}{2}}
	\int_{0}^\infty
	\kl{s \sin\kl{\theta}}^{\frac{\ddim-1}{2}}
	\kl{\Fo f}  \kl{\kk, s \cos\kl{\theta}  }
	\abs{\kk}^{\frac{3-\ddim}{2}}
J_{\frac{\ddim-3}{2}} \kl{ \abs{\kk}s \sin\kl{\theta}  }
	\rmd s
	\\
	&=
	\kl{2\pi}^{\frac{\ddim-1}{2}}
	\int_{0}^\infty
	\kl{\yy \tan\kl{\theta}}^{\frac{\ddim-1}{2}}
	\kl{\Fo f}  \kl{\kk,  \yy }
	\abs{\kk}^{\frac{3-\ddim}{2}}
J_{\frac{\ddim-3}{2}} \kl{ \abs{\kk} \yy\tan\kl{\theta}  }
	\frac{\rmd \yy}{\cos\kl{\theta}}
	\\
	&=
	\kl{2\pi}^{\frac{\ddim-1}{2}}
	\frac{\tan\kl{\theta}^{\frac{\ddim-1}{2}}}{  \cos\kl{\theta}}
	\abs{\kk}^{\frac{3-\ddim}{2}}
	\int_{0}^\infty
	\yy^{\frac{\ddim-3}{2}}
	\kl{\Fo f}  \kl{\kk,  \yy }
J_{\frac{\ddim-3}{2}} \kl{ \abs{\kk} \yy\tan\kl{\theta}  }
	\yy \rmd \yy \,.
\end{align*}
The last displayed equation is recognised  as the Hankel transform of order $\kl{\ddim-3}/2$
of $\Fo f$ in the second argument. We conclude, that
 \begin{equation} \label{eq:fs-0}
	\kl{\Fo \Ro^{(0)} f}\kl{\kk,\theta}
		=
		\kl{2\pi}^{\frac{\ddim-1}{2}}
		\frac{\tan\kl{\theta}^{\frac{\ddim-1}{2}}}{  \cos\kl{\theta}}
		\abs{\kk}^{\frac{3-\ddim}{2}}
		\kl{ \Ho_{\frac{\ddim-3}{2}} \Fo \yy^{\frac{\ddim-3}{2}}f}	\kl{\kk, \abs{\kk} \tan\kl{\theta}}
	\,.
\end{equation}
This shows \req{fourierslice} for the special case
$\mm =0$.

For general $\mm \in \R$ we use the relation
$\Ro^{(0)} \yy^{-\mm} f =
\cos \kl{\theta}^{-\mm}  \Ro^{(\mm)} f$ from Lemma \ref{lem:munu}. Together with \req{fs-0}
this yields
\begin{multline*}
	\kl{ \Ho_{\frac{\ddim-3}{2}} \Fo
	\yy^{\frac{\ddim-3}{2}-\mm}f}
	\kl{\kk, \abs{\kk} \tan\kl{\theta}}
	=
	\kl{ \Ho_{\frac{\ddim-3}{2}} \Fo
	\yy^{\frac{\ddim-3}{2}} \yy^{-\mm}f}
	\kl{\kk, \abs{\kk} \tan\kl{\theta}}
	\\
	=
	\kl{2\pi}^{\frac{1-\ddim}{2}}
	\frac{\cos\kl{\theta}}{\tan\kl{\theta}^{\frac{\ddim-1}{2}}}
	\abs{\kk}^{\frac{\ddim-3}{2}}
	\kl{\Fo \Ro^{(0)} \yy^{-\mm} f}\kl{\kk,\theta}
	=
	\kl{2\pi}^{\frac{1-\ddim}{2}}
	\frac{  \cos\kl{\theta}^{1-\mm}}
	{\tan\kl{\theta}^{\frac{\ddim-1}{2}}}
	\abs{\kk}^{\frac{\ddim-3}{2}}
	\kl{\Fo \Ro^{(\mm)}f}\kl{\kk,\theta}
	\,.
\end{multline*}
This is  \req{fourierslice} for the
case of general $\mm \in \R$ and concludes the proof of
Theorem~\ref{thm:fourierslice}.

\subsection{Proof of reconstruction formula \req{fbp}}
\label{sec:fbp}

We start with the proof of \req{fbp} for $\mm=0$.
Application of the inversion formulas for the Fourier and the Hankel transform followed by  the substitution $\ky = \abs{\kk} \tan\kl{\theta}$ shows
\begin{align*}
\yy^{\frac{\ddim-3}{2}}f \kl{ \xx, \yy}
&=
\frac{1}{\kl{2\pi}^{\ddim-1}}
\int_{\R^{\ddim-1}}
\int_0^\infty
\kl{ \Ho_{\frac{\ddim-3}{2}} \Fo \yy^{\frac{\ddim-3}{2}}f}
\kl{\kk, \ky } J_{\frac{\ddim-3}{2}} \kl{\ky \yy}
e^{i\kk\xx}
 \,
\ky \rmd \ky
\, \rmd \kk
\\
&=
\frac{1}{\kl{2\pi}^{\ddim-1}}
\int_{\R^{\ddim-1}}
\int_0^{\pi/2}
\kl{ \Ho_{\frac{\ddim-3}{2}} \Fo \yy^{\frac{\ddim-3}{2}}f}
\kl{\kk, \tan\kl{\theta} \yy } J_{\frac{\ddim-3}{2}} \kl{\abs{\kk} \tan\kl{\theta} \yy}
e^{i\kk\xx}
 \frac{\abs{\kk}^2 \tan\kl{\theta}}{\cos\kl{\theta}^2}
\,
\rmd \theta
 \rmd \kk \,.
\end{align*}
Application of the
Fourier slice identity (Theorem \ref{thm:fourierslice})
with $\mm=0$ and  interchanging the order of integration then yields
\begin{equation}\label{eq:aux1}
\yy^{\frac{\ddim-3}{2}}f \kl{ \xx, \yy}
=
\kl{2\pi}^{\frac{3\kl{1-\ddim}}{2}}
\int_0^{\pi/2}
\frac{\kl{ \tan\kl{\theta}}^{\frac{3-\ddim}{2}}}{\cos\kl{\theta}}
\int_{\R^{\ddim-1}}
	\abs{\kk}^{\frac{\ddim+1}{2}}
	\kl{\Fo \Ro^{(0)} f}\kl{\kk,\theta}
 J_{\frac{\ddim-3}{2}} \kl{\abs{\kk} \tan\kl{\theta} \yy}e^{i\kk\xx}
\rmd \kk
\, \rmd \theta \,.
\end{equation}

By \req{ps}, we have
\begin{equation*}
J_{\frac{\ddim-3}{2}} \kl{\abs{\kk} \tan\kl{\theta} \yy}
=
\kl{2\pi}^{\frac{1-\ddim}{2}} \abs{\kk}^{\frac{\ddim-3}{2}} \kl{\tan\kl{\theta} \yy}^{\frac{\ddim-3}{2}}
\int_{S^{\ddim-2}}
e^{- i \kk \tan\kl{\theta} \yy \win } \rmd\win \,.
\end{equation*}

Therefore,
\begin{multline*}
\int_{\R^{\ddim-1}}
	\abs{\kk}^{\frac{\ddim+1}{2}}
	\kl{\Fo \Ro^{(0)} f}\kl{\kk,\theta}
 J_{\frac{\ddim-3}{2}} \kl{\abs{\kk} \tan\kl{\theta} \yy}e^{i\kk\xx}
\rmd \kk
\\
\begin{aligned}
&=
\kl{2\pi}^{\frac{1-\ddim}{2}}
\int_{S^{\ddim-2}}
\kl{\tan\kl{\theta} \yy}^{\frac{\ddim-3}{2}}
\ekl{
\int_{\R^{\ddim-1}}
	\abs{\kk}^{\ddim-1}
	\kl{\Fo \Ro^{(0)} f}\kl{\kk,\theta}
 e^{i \kk \kl{\xx - \tan\kl{\theta} \yy \win }}
\rmd \kk}
\\
&=
\kl{2\pi}^{\frac{\ddim-1}{2}}
\int_{S^{\ddim-2}}
\kl{\tan\kl{\theta} \yy}^{\frac{\ddim-3}{2}}
\kl{\Io^{(1-\ddim)}\Ro^{(0)} f}\kl{\xx - \tan\kl{\theta} \yy \win,\theta}
\rmd\win \,.
\end{aligned}
\end{multline*}
Together with  \req{aux1} this further implies
\begin{multline*}
\yy^{\frac{\ddim-3}{2}}f \kl{ \xx, \yy}
=
\frac{1}{\kl{2\pi}^{\ddim-1}}
\int_0^{\pi/2}
\frac{\kl{ \tan\kl{\theta}}^{\frac{3-\ddim}{2}}}{\cos\kl{\theta}}
\int_{S^{\ddim-2}}
\kl{\tan\kl{\theta} \yy}^{\frac{\ddim-3}{2}}
\kl{\Io^{(1-\ddim)}\Ro^{(0)} f}\kl{\xx - \tan\kl{\theta} \yy \win,\theta}
\rmd\win
\, \rmd \theta
\\
=
\frac{\yy^{\frac{\ddim-3}{2}}}{\kl{2\pi}^{\ddim-1}}
\int_0^{\pi/2}
\frac{1}{\cos\kl{\theta}}
\int_{S^{\ddim-2}}
\kl{\Io^{(1-\ddim)}\Ro^{(0)} f}\kl{\xx - \tan\kl{\theta} \yy \win,\theta}
\rmd\win
\, \rmd \theta
\,.
\end{multline*}
This shows formula \req{fbp} for the special case
$\mm =0$.

To show  \req{fbp}  in the general case $\mm \in \R$, we again use the relation
$\cos \kl{\theta}^{-\mm}  \Ro^{(\mm)} f =
\Ro^{(0)} \yy^{-\mm} f$ from Lemma \ref{lem:munu}. Hence application of the reconstruction formula for the special case $\mm=0$ to $\Ro^{(0)} \yy^{-\mm} f$ yields
\begin{equation*}
\yy^{-\mm}
f \kl{ \xx, \yy}
=
\kl{2\pi}^{1-\ddim}
\int_0^{\pi/2}
\frac{1}{\cos\kl{\theta}^{1+\mm}}
\int_{S^{\ddim-2}}
\kl{\Io^{(1-\ddim)} \Ro^{(\mm)} f}\kl{\xx - \tan\kl{\theta} \yy \win,\theta}
\rmd\win
\, \rmd \theta \,.
\end{equation*}
This shows \req{fbp} in the general case $\mu \in \R$.

\subsection{Proof of  reconstruction formula \req{fbp2}}
\label{sec:fbp2}

Finally we derive \req{fbp2} as an easy consequence of \req{fbp}.  To that end
we first substitute $\theta = \arctan \kl{r / \yy}$ with $r \in \kl{0, \infty}$. Then
$\rmd \theta = \yy^{-1}  \cos\kl{\theta}^2 \rmd r$
and  $\cos \kl{\theta} = 1 \slash \sqrt{1 + r^2/\yy^2}$. Consequently, \req{fbp} implies
\begin{multline*}
f \kl{\xx, \yy}
=
\frac{y^{\mm-1} }{\kl{2\pi}^{\ddim-1}}
\int_0^{\infty}
\int_{S^{\ddim-2}}
\kl{1+r^2/\yy^2}^{\frac{\mm-1}{2}}
\kl{\Io^{(1-\ddim)}\Ro^{(0)} f}\kl{\xx - r \win,
\arctan \kl{\frac{r}{ \yy}}}
\rmd\win
\, \rmd r
\\
=
\frac{1}{\kl{2\pi}^{\ddim-1}}
\int_0^{\infty}
\int_{S^{\ddim-2}}
\kl{r^2+\yy^2}^{\frac{\mm-1}{2}}
\kl{\Io^{(1-\ddim)}\Ro^{(0)} f}\kl{\xx + r \win,
\arctan \kl{\frac{r}{ \yy}}}
\rmd\win
\, \rmd r \,.
\end{multline*}
Now we substitute
$\xx + r \win =\uu$  (polar coordinates  in the plane $\R^{\ddim-1}$ around the center $\xx$).
Then $\rmd \uu = r^{\ddim-2} \rmd\win \rmd r$ and $r = \abs{\uu-\xx}$. Consequently,
\begin{equation*}
f \kl{\xx, \yy}
=
\frac{1}{\kl{2\pi}^{\ddim-1}}
\int_{\R^{\ddim-1}}
\frac{ \kl{\abs{\uu-\xx}^2+\yy^2}^{\frac{\mm-1}{2}} }{\abs{\uu-\xx}^{\ddim-2} }
\kl{\Io^{(1-\ddim)}\Ro^{(0)} f}\kl{\xx,
\arctan \kl{\frac{\abs{\uu-\xx}}{ \yy}}}
\; \rmd  \xx \,.
\end{equation*}
This is the reconstruction formula \req{fbp2}.

\section{Discussion}
\label{sec:discussion}

In this paper we derived explicit reconstruction formulas
for the conical Radon transform, which integrates a function in $\ddim$ spatial variables over all cones with vertices on a hyperplane and symmetry axis orthogonal to this plane. The derived formulas are of the back-projection type and are theoretically exact. Further, they are  local for odd $\ddim$, and non-local for even $\ddim$.
Among others, inversion of the conical Radon
transform is relevant for emission tomography using Compton cameras as proposed in \cite{EveFleTidNig77,Sin83,TodNigEve74}. Such a device measures the direction as well as the
scattering angle of an incoming photon at the front of the camera. The location of the photon emission can therefore be traced back to the surface of a cone. Recovering the density of the photon source therefore yields to the inversion of the conical Radon transform in a natural manner.

Radon transforms are the theoretical  foundation of many medical  imaging and remote sensing application. Certainly the most well known instance is the classical Radon transform, which integrates a function over hyperplanes. Among others, inversion of the classical Radon transform is important for classical transmission computed tomography and has been studied
 in many textbooks  (see, for example,  \cite{Hel99,Nat01}). Closed form reconstruction formulas are known for a long time and have first been derived already in 1917 by J. Radon \cite{Rad17}.
Another Radon transform that has been studied in detail
more recently is the spherical Radon transform.
This transform integrates a  function over spherical  surfaces (for some restricted centers of integration) and is, among others,  important for photo- and thermoacoustic tomography \cite{KucKun08}. Closed form reconstruction formulas for planar and spherical center sets have been found in
\cite{And88,Kun07a,Faw85,FinHalRak07,FinPatRak04}.  The  conical Radon transform, on the other hand,
is much less studied. In particular, closed form reconstruction formulas have only been known for the case $\ddim = 2$, see \cite{BasZenGul97,TruNgu11}.
In this paper we derived such reconstruction formulas for arbitrary dimension $\ddim  \geq 2$.
For computed tomography with Compton cameras \cite{BasZenGul98,CreBon94}, the three  dimensional case is of course the most relevant one. In this case, our reconstruction formulas have a particularly simple  structure and consist  of  an application of the Laplacian  followed by a conical back-projection. The numerical implementation seems quite straight forward following the ones of the classical or the spherical Radon transform (see, for example, \cite{FinHalRak07,Nat01}). Numerical studies, however, will  be subject of future research.

\providecommand{\noopsort}[1]{}

%Now note that we may interpret $\ddim^{-1} s^{1-\ddim} \partial_r$ as differentiation
%with respect to $s^\ddim$.
\end{document}